\theoremstyle{plain}
\newtheorem{thm}{Theorem}[section]
\newtheorem{lem}[thm]{Lemma}
\newtheorem{prop}[thm]{Proposition}
\theoremstyle{definition}
\newtheorem{defn}{Definition}[section]
\theoremstyle{remark}
\newtheorem{rem}{Remark}[section]
\theoremstyle{Example}
\newtheorem{exmp}{Example}[section]
\theoremstyle{Open Question}
\theoremstyle{Conjecture}
\numberwithin{equation}{section}
\begin{document}
\title[
multiset Version of Even-Odd  
]
{
A Multiset Version of Even-Odd Permutations Identity
}
\author{Hossein Teimoori Faal}



\maketitle

\begin{abstract}
In this paper, 
we give a new bijective proof of 
a multiset analogue of even-odd permutations 
identity. This multiset version is equivalent 
to the original 
coin arrangements lemma 
which is a key combinatorial lemma 
in the Sherman's Proof of a conjecture of Feynman   
about an identity on paths in planar graphs 
related to combinatorial solution of two dimensional 
Ising model in statistical physics.   
\end{abstract}

\section{Introduction and Motivation}
The \emph{Ising model}
\cite{Ising1925}
is a theoretical physics model of the \emph{nearest-neighbor}
interactions in a crystal structure.
In the Ising model,
the vertices of a graph $G=(V,E)$ represent \emph{particles} and
the edges describe interactions between pairs of particles.
The most common example
of a two dimensional Ising model 
is a \emph{planar square lattice} where
each particle interacts only with its neighbors.
A factor (weight) 
$J_{ij}$ is assigned to each edge $\{i,j\}$, where this
factor describes the nature of the interaction between particles $i$
and $j$.
A \emph{physical state} of the system is an
assignment of $\sigma_i \in \{+1,-1\}$ to each vertex $i$.
The
\emph{Hamiltonian} (or energy function) of the 
system is
defined as:
$$
H(\sigma)=-\sum_{\{i,j\}\in E}J_{ij}\sigma_i \sigma_j.
$$
The \emph{distribution} of the physical states over
all possible energy levels is encapsulated in the
\emph{partition function}:
$$
Z(\beta,G)=\sum_{\sigma}e^{-\beta H(\sigma)},
$$
where $\beta$ is changed for $\frac{K}{T}$, in which $K$ is a
constant and $T$ is a variable representing the temperature. 
\\ 
Motivated by a generalization of a \emph{cycle} in a graph, 
a set A
of edges is called \emph{even} if each vertex of V is incident with an even number of edges of A.
The generating
function of even subsets denoted by $\mathcal{E}(G,x)$ can be
defined as
$$
\mathcal{E}(G,x)=\sum_{\text {A: A is even}}\prod_{e\in A}x_{e}.
$$
It turns out that the Ising partition function for a
graph $G$ may be expressed in terms of the generating
function of the even sets of the same graph $G$. 
More precisely, we have the following
Van der Waerden's 
formula \cite{Waerden1941}
$$
Z(G,\beta)=2^{\vert V \vert}\prod_{\{i,j\} \in E}\cosh(\beta
J_{ij})\mathcal{E}(G,x)\vert_{x^{J_{ij}}=\tanh(\beta J_{ij})}.
$$ 
Now, let $G =(V,E)$ be
a planar graph embedded in the plane and for each edge $e$
we associate a formal variable $x_e$ which can be seen as
a weight of that edge.
Let $A=(V,A(G))$ be an arbitrary orientation of $G$.
If $e \in E$ then $a_e$ will denote the orientation of $e$ in $A(G)$
and $a^{-1}_e$ will be the reversed orientation to $a_e$.
We put $x_{a_e}=x_{a^{-1}_e}=x_{e}$.
A \emph{circular sequence} $p=v_1,a_1 ,v_2 ,a_2, \ldots,
a_n, (v_{n+1} = v_1)$ is called 
\emph{non-periodic closed walk} if
the following conditions are satisfied: 
$a_{i}\in \{a_e,a^{-1}_e:
e\in E\},a_{i}\neq a^{-1}_{i+1}$ and 
$(a_1,\ldots,a_n)\neq Z^{m}$
for some sequence $Z$ and $m>1$.
We also let $X(p)=\prod_{i=1}^{n}x_{a_i}$. We further 
let $sign(p)
=(-1)^{n(p)}$, where $n(p)$ is a \emph{rotation number} of 
$p$;
i.e., the number of integral revolutions of the
tangent vector. Finally put $W(p) = sign(p)X(p)$.
\\
There is a
natural \emph{equivalence} on non-periodic closed walks; that is,
$p$ is equivalent with \emph{reversed} $p$. Each equivalence
class has two elements and will be denoted by $[p]$. We assume 
$W([p])=W(p)$ and note that this definition is correct since
equivalent walks have the same sign.
\\
The following beautiful formula is   
due to Feynman who conjectured it, 
but did not gave a proof of it. 
It was Sherman who gave a proof based 
on a key \emph{combinatorial} lemma on \emph{coin arrangements} 
\cite{Sherman1960}
.
\begin{thm}[Feynman and Sherman]
Let $G$ be a planar graph. Then
$$
\mathcal{E}(G,x)=\prod[1-W([p])],
$$
where the product is over all equivalence classes of
non-periodic closed walks of G.
\end{thm}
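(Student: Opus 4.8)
The plan is to treat both sides as formal power series in the edge weights $x_e$ and to reduce the identity to a single combinatorial cancellation statement on closed walks. First I would expand the infinite product $\prod_{[p]}[1-W([p])]$. Since each factor equals $1-\operatorname{sign}(p)X(p)$, the expansion is a signed sum over finite collections $\mathcal{C}$ of distinct equivalence classes of non-periodic closed walks, with weight $(-1)^{|\mathcal{C}|}\prod_{[p]\in\mathcal{C}}W([p])$. Each collection determines a monomial $\prod_{e}x_e^{m_e}$, where $m_e$ records how many times the edge $e$ is traversed in total. Grouping by this monomial, the theorem becomes equivalent to the assertion that the signed total over all collections producing a fixed edge-multiset equals $1$ when that multiset is the indicator of an even set of edges (square-free, every vertex of even degree) and $0$ otherwise.

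The next step is to make the signs combinatorial. The factor $\operatorname{sign}(p)=(-1)^{n(p)}$ is defined through the rotation number, which is a priori a topological quantity, so I would invoke Whitney's formula: the rotation number of a generic closed plane curve has the same parity as $1$ plus its number of transverse self-intersections. Applied to a superimposed configuration of the walks in $\mathcal{C}$, this rewrites $\prod_{[p]\in\mathcal{C}}\operatorname{sign}(p)$, together with the mutual-crossing contributions produced when several walks are overlaid, as $(-1)$ raised to the total number of crossings of the configuration, up to a controlled correction. This is the step that converts the geometry of the planar embedding into a bookkeeping of crossings that the cancellation can act on.

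The crux is then the coin arrangements lemma, equivalently the multiset version of the even--odd permutations identity, which supplies the required cancellation. Fixing the edge-multiset, the collections of non-periodic closed walks realizing it, weighted by crossing parity, must be matched by a sign-reversing involution so that the terms cancel in pairs, the sole exception occurring when the edges form an even set, where exactly one configuration survives with sign $+1$. I expect this to be the main obstacle: the involution has to respect the crossing-sign weight and the non-periodicity constraint at once, and it is precisely here that the even--odd structure enters, since local rearrangements of walk-segments meeting at a common vertex behave like transpositions and pair even arrangements against odd ones.

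Finally, I would assemble the pieces. The surviving contributions are exactly the square-free monomials $\prod_{e\in A}x_e$ indexed by even sets $A$, each appearing with coefficient $1$, which is precisely $\mathcal{E}(G,x)$, while every non-even edge-multiset cancels to $0$. Comparing coefficients monomial by monomial then yields the claimed equality $\mathcal{E}(G,x)=\prod[1-W([p])]$.
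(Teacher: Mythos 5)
Your proposal is an outline of Sherman's strategy rather than a proof, and note that the paper itself offers no proof to compare against: Theorem~1.1 is stated as background and attributed to Sherman \cite{Sherman1960}, the paper's own contribution being a bijective proof of the key lemma (the coin arrangements lemma, in weighted multiset form) on which Sherman's argument relies. Measured as a standalone argument, your sketch has genuine gaps at exactly the two places where all the work lies. First, the sign bookkeeping: you invoke Whitney's formula to convert rotation numbers into crossing parities, but for a superimposed collection of several closed walks you only say this works ``up to a controlled correction.'' Making that precise requires, among other things, the fact that two distinct closed curves in general position in the plane cross an even number of times, together with a careful scheme for perturbing walks that share edges of the graph into general position; none of this is carried out, and it is not a formality, since the whole point of the theorem is that planarity makes the signs cooperate.

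Second, and more seriously, the cancellation step is asserted, not performed. You write that the collections realizing a fixed non-even edge-multiset ``must be matched by a sign-reversing involution'' and that you ``expect this to be the main obstacle''---but that obstacle is precisely the content of the coin arrangements lemma plus the nontrivial translation between its statement (exhaustive unordered arrangements of a fixed multiset of coins into distinct, non-periodic circular orders) and collections of distinct non-periodic closed walks traversing a fixed edge multiset. You never specify how a collection of walks is encoded as such an arrangement, how the distinctness and non-periodicity hypotheses of the lemma match the constraints on walks, nor why, for a square-free even multiset, the net coefficient is exactly $1$ (an even set decomposes into closed walks in many ways, so even the surviving coefficient requires the crossing analysis again, not just the lemma). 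So the proposal correctly names the ingredients of Sherman's proof---product expansion, Whitney's theorem, the coin arrangements lemma---but the two central steps are placeholders, and the argument as written does not establish the theorem.
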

Here is the original statement of the coin arrangement lemma:
Suppose we have a fixed collection of $N$ objects of which
$m_1$ are of one kind, $m_2$ are of second kind, $\ldots$, and
$m_n$ of $n$-th kind. Let $b_{N,k}$ be the number of exhaustive
unordered arrangements of these symbols into $k$ disjoint,
nonempty, circularly ordered sets such that no two circular
orders are the same and none are periodic. Then, we have
$$
\sum_{k=1}^{N}(-1)^{k}b_{N,k}=0,~~~~~~(N> 1).
$$
\\
It is worth to note that when the collection of objects constitute a set of $n$ elements, then 
the numbers $b_{n,k}$ are exactly \emph{Stirling cycle} numbers; that is,
the number of permutations of 
the set 
$
\{
1,2,\ldots, n
\}
$
(or $n$ - permutations)
with exactly $k$ cycles 
in its decompositions into disjoint cycles. 
It is noteworthy that the coin arrangements lemma in this 
particular case, can be 
reformulated as the following well-known
identity in combinatorics of permutations.
\begin{prop}\label{permident1}
[Even-Odd Permutations Identity]
For any integer number $n>1$,
The number of even $n$-permutations 
is the same as the number of odd $n$-permutations.
\end{prop}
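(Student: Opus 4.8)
The plan is to give a bijective proof by exhibiting an explicit parity-reversing involution on the symmetric group $S_n$. Write each permutation $\sigma \in S_n$ as a product of transpositions, and call $\sigma$ \emph{even} or \emph{odd} according as the number of transpositions in such a decomposition is even or odd, recording this by the sign $\mathrm{sgn}(\sigma) \in \{+1,-1\}$. Let $E_n$ and $O_n$ denote the sets of even and odd $n$-permutations, respectively, so that the claim is simply $|E_n| = |O_n|$.

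First I would fix the transposition $\tau = (1\;2)$, which exists precisely because $n > 1$, and define the map $\phi \colon S_n \to S_n$ by $\phi(\sigma) = \tau\sigma$. Since $\tau$ is a single transposition, composing with it changes the parity of the transposition count by one, so $\phi$ carries $E_n$ into $O_n$ and $O_n$ into $E_n$. Because $\tau^2 = \mathrm{id}$, the map satisfies $\phi(\phi(\sigma)) = \sigma$, hence $\phi$ is an involution and in particular a bijection. Restricting $\phi$ to $E_n$ then yields a bijection $E_n \to O_n$, which gives $|E_n| = |O_n|$ and completes the argument.

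The step I expect to be the genuine obstacle is the one hidden in the very definition of \emph{even} and \emph{odd}: namely, that the parity of the number of transpositions in a decomposition of $\sigma$ is an invariant of $\sigma$, independent of the chosen decomposition. Without this well-definedness the sets $E_n$ and $O_n$ are not even meaningful, and the involution $\phi$ cannot be said to reverse anything. I would settle it by verifying that $\mathrm{sgn}$ is multiplicative, for instance by realizing $\mathrm{sgn}(\sigma)$ as $\prod_{i<j}\frac{\sigma(j)-\sigma(i)}{j-i}$ or equivalently as $(-1)^{n-c(\sigma)}$, where $c(\sigma)$ is the number of cycles of $\sigma$; either description shows at once that a transposition has sign $-1$ and that $\mathrm{sgn}(\tau\sigma) = -\mathrm{sgn}(\sigma)$, which is exactly what the involution argument requires.

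Finally, it is worth recording that $|E_n| - |O_n| = \sum_{\sigma \in S_n}\mathrm{sgn}(\sigma)$, so the identity $|E_n| = |O_n|$ is equivalent to the vanishing $\sum_{\sigma}\mathrm{sgn}(\sigma) = 0$. Using $\mathrm{sgn}(\sigma) = (-1)^{n-c(\sigma)}$ and grouping the permutations by their number of cycles $k$, this is in turn the special case $\sum_{k}(-1)^k c(n,k) = 0$ of the coin arrangements lemma, where $c(n,k)$ is the Stirling cycle number. This is the precise sense in which the present proposition is the set-theoretic shadow of that lemma, and it is the template that the multiset generalization will have to reproduce at the level of circularly ordered arrangements.
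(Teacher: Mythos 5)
Your proof is correct, but it takes a genuinely different route from the paper's. The paper never argues inside $S_n$ at all: it treats this proposition as the set-theoretic special case of the coin arrangements lemma, which it establishes in full multiset generality (Theorem \ref{WCoin1}) in two ways — by equating coefficients in the Witt identity, and bijectively via a split/merge involution on Lyndon factorizations (if the smallest Lyndon factor $l_1$ of a word is splittable, replace it by its standard factorization $(r_1,s_1)$; otherwise merge $l_1$ with the next factor $l_2$ into the single Lyndon word $l_1 l_2$). Your involution $\sigma \mapsto (1\;2)\sigma$ exploits the group structure of $S_n$: left multiplication by a transposition is available only because permutations compose, and in cycle terms it is itself a split/merge rule, but one anchored at the fixed letters $1$ and $2$ (it splits the cycle containing both, or merges the two cycles containing them). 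That group structure is precisely what disappears in the multiset setting — words over a multiset do not compose — which is why the paper's involution must instead be anchored at the lexicographically smallest Lyndon factor and its standard factorization. So your argument is shorter and more elementary for the set case, while the paper's buys uniformity: the same involution proves the weighted multiset generalization, of which this proposition is the specialization $m_1 = \cdots = m_n = 1$. Your closing reduction of the identity to $\sum_k (-1)^k c(n,k) = 0$, with $c(n,k)$ the Stirling cycle numbers, matches the paper's own remark that the coin-arrangement counts $b_{N,k}$ reduce to Stirling cycle numbers when the multiset is a set, so the two viewpoints agree on how this statement sits inside the general lemma.
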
  
Our main goal here is to formulate a 
\emph{weighted version} of the even-odd permutations 
identity in the multiset setting.

\section{Basic Definitions and Notation}
As Knuth has noted in 
\cite[p.36]{Knuth1981}
, the term 
\emph{multiset} 
was suggested by N.G.de
Bruijn in a private communication to him.
Roughly speaking, a multiset is an unordered collection
of elements in which repetition is allowed. 
\begin{defn}
[
Multiset 
]
Let $\Sigma=\{a_{1}, \ldots, a_{n}\}$ be a 
finite alphabet. 
A multiset $M$ over $\Sigma$ denoted by 
$
[a_{1}^{m_{1}}, a_{2}^{m_{2}}, \ldots, 
a_{n}^{m_{n}}
]
$ 
is a finite collection of elements of $\Sigma$
with $m_{1}$ occurrences of $a_{1}$, $m_{2}$ occurrences 
of $a_{2}$, $\ldots$, and $m_{n}$ occurrences of $a_{n}$.
The number $N = m_{1} + m_{2} + \cdots + m_{n}$ is called 
the cardinality of $M$ and $m_{i}~(1 \leq i \leq n)$ is 
called the multiplicity of the element $a_{i}$. 
\end{defn}

\begin{defn}
[Permutation of a multiset]
Let $M$ be a multiset over a finite alphabet 
$\Sigma$
of cardinality $N$. 
We also let $i \geq N$ be a given integer. 
Then an $i$-permutation of $M$ is defined 
as an ordered arrangement of $i$ elements 
of $M$. In particular, an $N$-permutation of 
$M$ is also called a permutation of $M$.   
\end{defn}
\begin{exmp}
For the alphabet 
$\Sigma = \{
a, b, c
\}$, 
the string 
$\sigma = aabcba$
is a permutation of the multiset $M=[a^{3}, b^{2}, c^{1}]$
.

\end{exmp}
It is worth to note that by a simple counting 
argument, one can obtain that the number of 
permutations of the multiset 
$
M=
[a_{1}^{m_{1}}, a_{2}^{m_{2}}, \ldots, 
a_{n}^{m_{n}}
]
$
of cardinality $N$ is equal to 
$
\frac{N!}{m_{1}!
m_{2}! \cdots m_{n}!
}
$ 
. 
\\
In the rest of this section, we quickly review the basics of 
the combinatorics of words. The reader can consult the 
reference \cite{Lotha1983}.
\\
Let $\Sigma$ be a finite alphabet. The elements of 
$\Sigma$ are called \emph{letters}.  
A finite sequence of elements of $\Sigma$ is called
a \emph{word} ( or string ) over the alphabet $\Sigma$.
An empty sequence of letters is called an \emph{empty} word 
and is denoted by $\lambda$.  
\\
The set of all words over the alphabet 
$\Sigma$ will be denoted by $\Sigma^{\star}$. 
We also denote the set of non-empty words by 
$\Sigma^{+}$.
A word $u$ is called a factor ( resp. a prefix, resp. a suffix)
of a word $w$, if there exists 
words $w_{1}$ and $w_{2}$ such that 
$w = w_{1} u w_{2}$ (resp. $w= u w_{2} $, resp. $w=w_{1}u$).
\\
The $k$-th power of a word $w$ is defined by 
$w^{k} = ww^{k-1}$ with the convention that 
$w^{0} =\lambda$. 
A word $w \in \Sigma^{+}$ is called 
\emph{primitive} if the equation $w = u^{n}$ 
($ u \in \Sigma^{+}$) implies $n=1$. 
Two words $w$ and $u$ are \emph{conjugate} 
if there exist two words $w_{1}$
and $w_{2}$ such that $w = w_{1} w_{2}$
and $u = w_{2} w_{1}$. 
It is easy to see that the conjugacy relation is 
an \emph{equivalence relation}. A \emph{conjugacy class} 
(or \emph{necklace})
is a class of this equivalence relation. 
\\
For an \emph{ordered} alphabet 
$(\Sigma, <)$, the \emph{lexicographic}
order $\trianglelefteq$ on 
$(\Sigma^{\star}, <)$ is defined 
by letting $ w_{1} \trianglelefteq w_{2}$
if 
\begin{itemize}
\item 
$
w_{1} = u w_{2}, \hspace{0.4cm}(u\in \Sigma^{\star})
\hspace{0.4cm} \text{or} 
$

\item 
$
w_{1} = ras,\hspace{0.3cm} w_{2} = rbt\hspace{0.3cm}
a < b,\hspace{0.3cm}\text{for} \hspace{0.2cm}a,b\in\Sigma 
\hspace{0.3cm} \text{and} \hspace{0.2cm} r,s,t \in \Sigma^{\star}.
$
\end{itemize}  
In particular, if 
$w_{1} \trianglelefteq w_{2}$ and 
$w_{1}$ is not a proper prefix of $w_{2}$, 
we write $w_{1} \vartriangleleft w_{2}$.

A word is called a \emph{Lyndon} word 
if it is primitive and the \emph{smallest} word with respect to 
the lexicographic order in it's conjugacy
class.

\begin{exmp}
Let 
$
\Sigma = \{
1,2,3
\} 
$
be an ordered alphabet. Then, $l_{1} = 1123$ and 
$l_{2} = 1223$ are Lyndon words but 
$
l_{3} = 1131
$
is not a Lyndon word. 
\end{exmp}

The following factorization of the words as a 
\emph{non-increasing} 
product of Lyndon words is of fundamental 
importance in the combinatorics of words. 
From now on, we will denote the set of all
Lyndon words by $L$. 
\begin{thm}
[Lyndon Factorization 
]
Any word $w\in \Sigma^{+}$ can be written uniquely as a
non-increasing product of Lyndon words:
$$
w=l_{1}l_{2}\cdots l_{h},~l_{i}\in L,
\hspace{0.5cm}l_{1} 
\trianglerighteq l_{2} \trianglerighteq 
\cdots \trianglerighteq l_{h}.
$$

\end{thm}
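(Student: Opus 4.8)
The plan is to prove the two halves of the statement separately: \emph{existence} of a non-increasing factorization into Lyndon words, and its \emph{uniqueness}. Both halves rest on two elementary facts about Lyndon words that I would first isolate (either proving them directly or citing \cite{Lotha1983}). The first is the \emph{suffix characterization}: a nonempty word $w$ is a Lyndon word if and only if $w \vartriangleleft v$ for every proper nonempty suffix $v$ of $w$; here the strict relation $\vartriangleleft$ is the correct one, because a suffix $v$ is shorter than $w$ and hence cannot have $w$ as a prefix, so the comparison is always decided by a first mismatch. The second is the \emph{concatenation lemma}: if $u$ and $v$ are Lyndon words and $u$ strictly precedes $v$ in the lexicographic order $\trianglelefteq$, then the product $uv$ is again a Lyndon word. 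The concatenation lemma is established through the suffix characterization, by showing that each proper suffix of $uv$ is lexicographically larger than $uv$; the suffixes internal to $v$ are immediate, and a suffix of the form $u'v$ with $u'$ a suffix of $u$ is compared to $uv$ by invoking $u \trianglelefteq v$.

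For \emph{existence}, I would begin from the trivial factorization $w = a_{1}a_{2}\cdots a_{N}$ into single letters, each of which is vacuously a Lyndon word, so that $w$ is displayed as \emph{some} product of Lyndon words. Whenever two adjacent factors $l_{i}, l_{i+1}$ violate the non-increasing condition, that is, $l_{i}$ strictly precedes $l_{i+1}$, I replace the pair by the single word $l_{i}l_{i+1}$, which is again Lyndon by the concatenation lemma. Each such merge strictly decreases the number of factors, so the process terminates after finitely many steps; at termination no adjacent pair is strictly increasing, and since $\trianglelefteq$ is a total order this means the factors satisfy $l_{1} \trianglerighteq l_{2} \trianglerighteq \cdots \trianglerighteq l_{h}$, which is precisely the required form.

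For \emph{uniqueness}, the decisive step is the claim that in any non-increasing Lyndon factorization $w = l_{1}\cdots l_{h}$ the last factor $l_{h}$ is exactly the lexicographically smallest nonempty suffix of $w$. To see this I would take an arbitrary nonempty suffix $s$ of $w$. Either $s$ begins at a factor boundary, so $s = l_{j}l_{j+1}\cdots l_{h}$ and $l_{j}\trianglerighteq l_{h}$ forces $s \trianglerighteq l_{h}$; or $s$ begins strictly inside a factor, so $s = s'\,l_{j+1}\cdots l_{h}$ with $s'$ a proper nonempty suffix of $l_{j}$, in which case the suffix characterization gives $l_{j}\vartriangleleft s'$, whence $s' \vartriangleright l_{j}\trianglerighteq l_{h}$ and again $s \trianglerighteq l_{h}$. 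Granting the claim, $l_{h}$ depends only on $w$; if $w = m_{1}\cdots m_{k}$ is a second non-increasing Lyndon factorization then $m_{k}=l_{h}$, and cancelling this common suffix reduces to the shorter word $l_{1}\cdots l_{h-1}=m_{1}\cdots m_{k-1}$, so an induction on $\lvert w\rvert$ yields $h=k$ and $l_{i}=m_{i}$ for all $i$.

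The main obstacle in both halves is the careful handling of the lexicographic comparisons in the \emph{prefix} case, since $\trianglelefteq$ and $\vartriangleleft$ behave differently precisely when one word is a proper prefix of another. In the concatenation lemma one must rule out the degenerate comparisons where $u$ is a prefix of $v$, and in the smallest-suffix claim one must treat separately the possibility that $l_{h}$ is a prefix of the competing suffix rather than differing from it at an interior position. Discharging these boundary comparisons cleanly is where the genuine work lies; once they are in place, the merging argument for existence and the induction for uniqueness are entirely routine.
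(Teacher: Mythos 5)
The paper does not prove this theorem at all: it is stated as a known result (the Chen--Fox--Lyndon theorem) with a pointer to Lothaire's \emph{Combinatorics on Words}, so there is no internal proof to compare yours against. Your argument is correct, and it is essentially the standard textbook proof found in that reference: existence by starting from the single-letter factorization and repeatedly merging an adjacent increasing pair, using closure of Lyndon words under concatenation of a strictly increasing pair; uniqueness by showing that the last factor of any non-increasing Lyndon factorization must be the lexicographically smallest nonempty suffix of $w$, hence is determined by $w$, then cancelling and inducting on length. The two lemmas you isolate (the suffix characterization and the concatenation lemma) are exactly the right tools, and your handling of the prefix/mismatch dichotomy --- the only genuinely delicate point --- is sound: a proper suffix is strictly shorter than the word, so the comparisons forced by the Lyndon property are decided at a mismatch and therefore survive appending letters on the right. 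One small slip worth fixing: in your sketch of the concatenation lemma, the comparison of $uv$ with a suffix $u'v$ (where $u'$ is a proper nonempty suffix of $u$) is settled by the Lyndon property of $u$, namely $u \vartriangleleft u'$, not by the hypothesis $u \trianglelefteq v$; that hypothesis is instead what handles the suffix $v$ itself, where the degenerate case ``$u$ is a prefix of $v$'' is resolved by the Lyndon property of $v$. Since the lemma is true and you also offer to cite it, this mis-attribution does not damage the architecture, but the roles of the two hypotheses should be stated correctly if you write the lemma's proof out in full.
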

One of the important results about the 
characterization of Lydon words 
is the following. 
\begin{prop}
A word $w\in \Sigma^{+}$ is a Lyndon word 
if and only if $w\in \Sigma$ or
$w=rs$ with $r,s\in L$ and 
$r \vartriangleleft s$. 
Moreover, if there
exists a pair $(r, s)$ with $w = rs$ such that $s,w\in
L$ and $s$ of maximal length, then $r\in L$
and $l \vartriangleleft rs \vartriangleleft s$.
\end{prop}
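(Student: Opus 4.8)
The plan is to prove the two implications of the biconditional separately and then read off the \emph{moreover} clause from the proof of necessity. The engine throughout is the familiar reformulation of the definition: a nonempty word $w$ belongs to $L$ if and only if it is strictly smaller, in the lexicographic order, than every one of its proper nonempty suffixes. I would first record this characterization, which follows from the given definition (primitive and least in its conjugacy class) because the conjugates of $w$ are exactly the words $vu$ arising from the factorizations $w=uv$; leastness together with primitivity says precisely that $w$ strictly precedes each such $vu$, and a short argument upgrades this to $w$ strictly preceding each proper suffix $v$. Two elementary order facts are used repeatedly: prepending a common prefix preserves strict order, and if $a$ is not a prefix of $b$ while $a\trianglelefteq b$, then $ax$ precedes $by$ for all $x,y$, since the branch occurs within the first $\min(|a|,|b|)$ letters and survives any extension.

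For sufficiency, single letters lie in $L$ by the base case, so suppose $w=rs$ with $r,s\in L$ and $r\vartriangleleft s$. I would verify the suffix criterion for $w$. Every proper nonempty suffix $v$ is either (i) a suffix of $s$, or (ii) of the form $v=r's$ with $r'$ a proper nonempty suffix of $r$. Case (ii) is immediate: $r\in L$ gives $r\vartriangleleft r'$, and appending $s$ preserves this, so $w=rs\vartriangleleft r's=v$. In case (i) the pivotal subclaim is $rs\vartriangleleft s$; granting it, the case $v=s$ is the subclaim itself, while for a proper suffix $v$ of $s$ we have $s\vartriangleleft v$ (as $s\in L$), and transitivity yields $rs\vartriangleleft v$.

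The subclaim $rs\vartriangleleft s$ is where the real care is needed and is the main obstacle of this direction. If $r$ branches below $s$ the inequality is immediate from the branch-survival fact. The dangerous case is $r$ a proper prefix of $s$, say $s=rt$: here $r\vartriangleleft s$ holds only through the prefix, so branch-survival does not apply. I would instead use that $s=rt\in L$ forces $s\vartriangleleft t$, and then cancel the common prefix $r$ from the comparison of $rs=r(rt)$ with $s=rt$, reducing it to the comparison of $rt$ with $t$, that is, to $s\vartriangleleft t$, which holds. Thus $rs\vartriangleleft s$ in every case, completing sufficiency.

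For necessity and the \emph{moreover} clause, let $w\in L$ with $|w|\ge 2$ and let $s$ be the longest proper suffix of $w$ that is itself a Lyndon word; such suffixes exist since the last letter qualifies. Write $w=rs$. That $r\vartriangleleft s$ is direct: $s$ is a proper suffix of $w\in L$, so $w=rs\vartriangleleft s$, and examining the branch position shows $r$ either branches below $s$ or is a prefix of $s$, so $r\vartriangleleft s$ either way. To see $r\in L$ I would invoke the Lyndon Factorization theorem to write $r=l_1\cdots l_k$ with $l_1\trianglerighteq\cdots\trianglerighteq l_k$, and argue $k=1$ by contradiction. If $k\ge 2$ then $l_1$ is a proper prefix of $r$, hence lexicographically smaller than $r$; since $l_k\trianglelefteq l_1$ and $r\vartriangleleft s$, the letter $l_k$ is strictly smaller than $s$, so $l_k\vartriangleleft s$. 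As $l_k,s\in L$, the already-proved sufficiency gives $l_ks\in L$; but $l_ks$ is a proper suffix of $w$ strictly longer than $s$, contradicting maximality. Hence $k=1$ and $r\in L$. Finally the chain $r\vartriangleleft rs\vartriangleleft s$ records the conclusions: $r$ is a prefix of $w=rs$, and $rs\vartriangleleft s$ was established above. The only genuinely nontrivial inputs are the suffix characterization at the outset and the prefix subcase inside the sufficiency argument; everything else is bookkeeping with the order.
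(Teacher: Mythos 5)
The paper states this proposition without proof: it is imported as a standard fact of combinatorics on words (cf.\ the Lothaire reference) and is used later only through the notion of standard factorization and the key lemma of Section~4. So there is no argument of the paper to compare yours against, and I judge the proposal on its own merits: it is correct, and it is the classical Chen--Fox--Lyndon-style argument. The suffix characterization of Lyndon words, the two-case check of that characterization on the proper suffixes of $rs$ (with the real content isolated in the subclaim $rs\vartriangleleft s$, proved separately when $r$ branches below $s$ and when $r$ is a proper prefix $s=rt$, the latter via $s\vartriangleleft t$ and left cancellation of $r$), and the necessity/moreover argument via the longest proper Lyndon suffix, the Lyndon factorization $r=l_1\cdots l_k$, and the already-proved sufficiency applied to $l_k s$ to contradict maximality when $k\geq 2$ --- all of this is sound, and the two order facts you isolate at the start are exactly the right tools.

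Two remarks. First, your implicit reading of $\vartriangleleft$ as the plain strict lexicographic order is not just permissible but necessary: under the paper's literal definition of $\vartriangleleft$ (namely $\trianglelefteq$ together with ``not a proper prefix''), the proposition is false --- $w=aab$ is a Lyndon word whose only factorization into two Lyndon words is $a\cdot ab$, where $a$ is a proper prefix of $ab$; likewise $r\vartriangleleft rs$ in the ``moreover'' chain (you correctly read the paper's $l$ as a typo for $r$) could never hold literally, since $r$ is a proper prefix of $rs$. Your explicit treatment of the prefix case, which you rightly flag as the dangerous one, is precisely what proves the statement in its true, intended form. Second, two cosmetic slips to clean up in a final write-up: $l_k$ is a Lyndon word, not necessarily ``the letter'' (nothing in your argument uses letterhood, so nothing breaks); and the suffix characterization you place at the outset is only sketched --- the ``short argument'' upgrading $w\vartriangleleft vu$ to $w\vartriangleleft v$ needs the unbordered-word/commutation step (if $v$ were both a prefix and a suffix of $w$, comparing the two conjugates forces $uv=vu$, hence a common power, contradicting primitivity), and this should be spelled out for the proof to be fully self-contained.
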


\begin{defn}\label{standfact1}
For $w \in L\backslash \Sigma$ a Lyndon word consisting of more than
a single letter, the pair $(r, s)$ with $w = rs $ such that 
$r,s \in L$ and $s$ of maximal length is called the
\textit{standard factorization} of the Lyndon $w$.
\end{defn}

\section{Multiset Version of Even-Odd Permutations}
In this section, we first briefly review 
basics of the combinatorics of permutations. For 
more detailed introduction see \cite{Bona2010}.
From now on, we will denote
the set $\{1,2,\ldots, n\}$ by $[n]$.
\\
Recall that a \emph{permutation} $\tau $ of a set $[n]$ 
(or simply an $n$-permutation)
is a 
bijective function $\tau:[n] \mapsto [n]$. 
A \emph{one-line} representation of $\tau$
is denoted by 
$
\tau = 
\tau(1) \tau(2) \cdots \tau(n)
$
. 
\\
Recall that from abstract algebra, we know that 
any permutation can be written as a product of 
\emph{disjoint} cycles. 
Hence, a representation of a permutation 
in terms of disjoint cycles is called \emph{cycle} representation. 
\begin{exmp}
Consider the bijective function 
$$
\tau:~[5] \mapsto [5],~~~ \tau(1) = 3,~ \tau(2) = 4,~ 
\tau(3) =1,~ \tau(4) =5,~ \tau(5) = 2. 
$$
A one-line representation of $\tau$ is 
$\tau = 34152$. The cycle representation of $\tau$ 
is equal to $\tau = (13)(245)$. 

\end{exmp}

The set of all permutations of the set $[n]$ will 
be denoted by $S_{n}$.
\begin{defn}
[
Cycle Index
]
Let $\tau = c_{1}c_{2} \cdots c_{k}$ be the cycle 
representation of the permutation $\tau \in S_{n}$. 
Then, the number $n - k$ is called the 
cycle index of $\tau$ and will be denoted by
$ind_{c}(\tau)$. 
\end{defn}
\begin{defn}
[
Inversion
]
Let 
$\tau \in S_{n}
= \tau(1) \tau(2) \cdots \tau(n)
$ be a permutation. We say that
$(\tau(i), \tau(j))$
is an inversion of $\tau$ if 
$i < j$ implies $\tau(i) > \tau(j)$.
\end{defn}
We will denote the number of inversions 
of a permutation $\tau$ with
$inv(\tau)$. 

We recall the well-known fact due to Cauchy 
\cite{Cauchy1815}
that for 
any permutation $\tau \in S_{n}$,
the parity of $inv(\tau)$ and $ind_{c}(\tau)$ are the same. 
Therefore, we can divide the class of all permutations 
$S_{n}$ into two important subclasses. 
\begin{defn}
[Even - Odd Permutations]
A permutation $\tau = c_{1} c_{2} \cdots c_{k}$ in 
$S_{n}$ 
is called an even (resp. odd) $n$-permutation if 
$ind_{c}(\tau)$ is even (resp. odd). 
\end{defn}
\begin{exmp}
For $n=5$, the permutation 
$\tau = 13524 =(1)(2354)$ has cycle index equal 
to $3$ and hence $\tau$ is an odd permutation, but the cycle index of  
$\tau' = 21354 = (12)(3)(45)$ is $2$ and so the 
permutation $\tau'$ is even. 
\end{exmp}
Considering the above discussions, the coin arrangements
lemma in the case that there exists exactly one coin of each 
type can be restate as follows. 
\begin{prop}
[Set version of coin arrangements]
For any integer $n>1$,
the number of even $n$-permutations is the same as the number 
of odd $n$-permutations. 
\end{prop}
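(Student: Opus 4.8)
The plan is to give a direct bijective proof by exhibiting a parity-reversing bijection on $S_n$. Fix the transposition $t = (1\,2)$, which is available since $n > 1$, and define $\phi : S_n \to S_n$ by $\phi(\tau) = t\tau$, the product taken in $S_n$. Because $t$ is its own inverse, $\phi$ is an involution and in particular a bijection of $S_n$ onto itself. The whole argument then reduces to showing that $\phi$ always reverses parity: once that is established, $\phi$ restricts to a bijection from the set of even $n$-permutations onto the set of odd ones, which forces the two sets to have equal cardinality.

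The core step is a claim about how the cycle structure behaves under multiplication by a transposition. If $\tau = c_1 c_2 \cdots c_k$ has exactly $k$ cycles, I claim that $t\tau$ has exactly $k-1$ or $k+1$ cycles. To see this I would split into two cases according to whether the symbols $1$ and $2$ lie in the same cycle of $\tau$ or in different cycles. If they lie in different cycles, composing with $(1\,2)$ merges those two cycles into a single one, decreasing the cycle count by one; if they lie in the same cycle, the composition splits that cycle into two, increasing the count by one. In either case the number of cycles changes by exactly one, and no cycle not containing $1$ or $2$ is disturbed.

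Granting this, the cycle index $ind_c(\tau) = n - k$ changes by exactly $\pm 1$ under $\phi$, so $ind_c(\phi(\tau))$ and $ind_c(\tau)$ have opposite parity. By the definition of even and odd permutations this means $\phi$ sends every even permutation to an odd one and every odd permutation to an even one, which is precisely the parity-reversal needed above. (Equivalently, one could invoke the stated fact of Cauchy that $inv(\tau)$ and $ind_c(\tau)$ share the same parity and phrase everything through $inv$, but the cycle-count formulation is the most transparent here.)

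I expect the only real obstacle to be a careful, notation-free verification of the merge/split dichotomy, namely tracking exactly how the orbits of $\tau$ recombine when one composes with $(1\,2)$ and confirming that the count moves by $\pm 1$ and never by more. Everything else is immediate: that $\phi$ is a bijection follows from $t$ being an involution, that a change of $\pm 1$ in $ind_c$ flips parity is trivial, and that a parity-reversing bijection equalizes the two classes is a one-line consequence of $\phi$ being its own inverse.
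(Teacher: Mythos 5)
Your proof is correct: left multiplication by the fixed transposition $t=(1\,2)$ is an involution on $S_n$, and the merge/split dichotomy you invoke (which does hold, with fixed points counted as $1$-cycles) shows that this operation changes the number of cycles by exactly one, hence flips the parity of $ind_{c}(\tau)=n-k$; a parity-reversing involution forces the even and odd classes to have equal size. However, this is a genuinely different route from the paper's. The paper never proves the set version in isolation: it obtains it as the special case $m_{1}=\cdots=m_{n}=1$ of the weighted multiset coin arrangements lemma (Theorem \ref{WCoin1}), whose proof is a weight-preserving involution on Lyndon tuples --- either split the lexicographically smallest Lyndon factor $l_{1}$ via its standard factorization $(r_{1},s_{1})$, or merge $l_{1}$ with $l_{2}$ into a single Lyndon word --- and, for completeness, it also records Sherman's original argument via the Witt identity on generating functions of Lyndon words. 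Your argument is more elementary and self-contained, needing only the group structure of $S_n$; but precisely for that reason it does not generalize: permutations of a multiset do not form a group, so ``multiply by a transposition'' has no analogue there, whereas the paper's split/merge operation on Lyndon factors is exactly the device that survives the passage from permutations (disjoint cycles) to multiset words (distinct Lyndon factors). It is worth observing that the two involutions are structurally parallel --- yours merges or splits cycles, the paper's merges or splits Lyndon factors, each changing the number of parts by one --- so your proof can be read as the cycle-language shadow of the paper's bijection, valid only in the set case.
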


In the rest of this section, we attempt to formulate a
\emph{multiset version}
of the above well-known result in combinatorics of 
permutations. 
\\
For finding the right formulation of the coin arrangement 
lemma for multisets, we have to first replace 
permutations of the set $[n]$ 
with words of length $N$ defined on the multiset 
$M = [
1^{m_1},2^{m_2},\ldots, n^{m_n} 
]
$
of cardinality $N$
. 
The next step is to find the analogue of the 
cyclic decomposition of permutations
into disjoint cycles. 
It seems that 
the \emph{Lyndon factorization} of a word in which
all factors are \emph{distinct} is the suitable candidate. 
Hence, we come up with the following analogue of 
cycle index. 
\begin{defn}
[
Lyndon tuple
]
Let $\Sigma=\{1,2,\ldots, n\}$ be a finite ordered alphabet and
$M = [1^{m_1}, 2^{m_2},\dots, n^{m_n}]$ be 
a multiset 
over $\Sigma$ 
of cardinality $N$. We will call any permutation 
$w = w_{1}w_{2}\cdots w_{N}$
of $M$ an $N$-word over $M$. If $w= l_{1} l_{2} \cdots l_{k}$
is a Lyndon factorization of $w$ in which
$
l_{1} \vartriangleright l_2 \vartriangleright 
\ldots \vartriangleright l_{k}
$
, 
then a tuple 
$tup(w) = (l_k, \ldots , l_2, l_1)$ 
is called a Lyndon tuple of the word $w$ over $M$   
. 
\end{defn}

\begin{rem}
It is noteworthy to mention that a Lyndon tuple 
of a word consists of only distinct Lyndon words. 
\end{rem}

\begin{defn}
[
Lyndon index
]
Let $\Sigma=\{1,2,\ldots, n\}$ be a finite ordered alphabet and 
$M = [1^{m_1}, 2^{m_2},\dots, n^{m_n}]$ be 
a multiset of 
over $\Sigma$
cardinality $N$.
For a $N$-word $w \in \Sigma^{\star}$ over $M$ 
with $tup(w) = (l_{1}, l_{2}, \ldots, l_{k})$ such that 
$l_{1} \vartriangleleft l_{2} 
\vartriangleleft \ldots \vartriangleleft l_{k}$, 
the Lydon index of $w$ denoted by $i_{l}(w)$
is defined to be the number $N-k$.  
\end{defn}

\begin{defn}
[Even-Odd Words]
Let $\Sigma=\{1,2,\ldots, n\}$ be a finite ordered alphabet and 
$M = [1^{m_1}, 2^{m_2},\dots, n^{m_n}]$ be 
a multiset of 
over $\Sigma$ of 
cardinality $N$.
A $N$-word $w \in \Sigma^{\star}
$
over $M$
is said to be even (resp. odd) $N$-word if the Lyndon index 
$i_{l}(w)$ of $w$ is even (resp. odd).  
\end{defn}

\begin{exmp}
For an ordered alphabet 
$
\Sigma =
\{1,2,3\}
$
and a multiset  
$M = [1^{2}, 2, 3]$
,
the $4$-word $w_{1} = 2113 =(2)(113)$ has the Lyndon index 
equals $2$ and hence it is an even $4$-word. But 
the Lyndon index
of 
$w_{2} = 2131=(2)(13)(1)$
is $1$ and so the $4$-word $w_{2}$
is odd. 
\end{exmp}
Thus, we finally get the following reformulation of 
the Sherman's original coin arrangements lemma.
\begin{prop}
[
Multiset version of even-odd permutations identity
]
Let $\Sigma=\{1,2,\ldots, n\}$ be a finite ordered alphabet and 
$M = [1^{m_1}, 2^{m_2},\dots, n^{m_n}]$ be 
a multiset
over $\Sigma$
 of cardinality $N>1$.
Then, 
the number of even $N$-words over $M$ is the same as the number of 
odd $N$-words over $M$. 
\end{prop}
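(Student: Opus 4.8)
The plan is to recast the statement as a signed enumeration and then realize it by a sign-reversing involution. First I would translate from $N$-words to Lyndon data. By the Lyndon Factorization theorem every $N$-word $w$ over $M$ has a unique factorization $w=l_1 l_2\cdots l_k$ with $l_1\trianglerighteq\cdots\trianglerighteq l_k$; the words that actually carry a Lyndon tuple (hence a Lyndon index), namely those with $l_1\vartriangleright\cdots\vartriangleright l_k$, are exactly the ones whose factors are pairwise distinct, and by the Remark these correspond bijectively to the finite sets $S=\{l_1,\dots,l_k\}$ of distinct Lyndon words whose multiset-union is $M$. Since $i_l(w)=N-k=N-|S|$, an $N$-word is even (resp.\ odd) precisely when $|S|\equiv N$ (resp.\ $|S|\not\equiv N$) modulo $2$. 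Thus, after multiplying by the global sign $(-1)^N$, the claim ``$\#$even $=\#$odd'' is equivalent to
\[
\sum_{S}(-1)^{|S|}=0,
\]
the sum running over all sets $S$ of distinct Lyndon words of content $M$; this is Sherman's coin-arrangements lemma in Lyndon language.

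Before building the bijection I would record the generating-function identity the involution must realize, as both guide and correctness check. Assigning a commuting weight $x_i$ to the letter $i$ and writing $x^l$ for the content weight of a Lyndon word $l$, distinctness yields
\[
\sum_{S}(-1)^{|S|}\prod_{l\in S}x^l=\prod_{l\in L}\bigl(1-x^l\bigr).
\]
On the other hand, the Lyndon Factorization theorem says every word is a unique non-increasing product of Lyndon words, which is precisely the statement $\prod_{l\in L}(1-x^l)^{-1}=\bigl(1-\sum_{i}x_i\bigr)^{-1}$. Hence the product telescopes to $\prod_{l\in L}(1-x^l)=1-\sum_i x_i$, whose coefficient of $\prod_i x_i^{m_i}$ is $0$ as soon as $N=\sum_i m_i>1$. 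This already proves the proposition; more importantly it tells me a bijective proof must pair each admissible $S$ with one of opposite parity while preserving content.

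For the bijective proof I would construct a fixed-point-free, content-preserving, sign-reversing involution $\phi$ on the family of sets $S$ of distinct Lyndon words of content $M$. The two moves are dictated by the characterization of Lyndon words and by Definition \ref{standfact1}: a \emph{split} replaces a chosen $l\in S$ of length $\geq 2$ by the pair $\{r,s\}$ from its standard factorization $l=rs$ (so $r,s\in L$ and, by the cited Proposition, $r\vartriangleleft rs\vartriangleleft s$), raising $|S|$ by one; a \emph{merge} replaces a chosen pair $r\vartriangleleft s$ of $S$ by the single Lyndon word $rs$, lowering $|S|$ by one. On the worked case $M=[1^2,2^2]$ this pairs $\{1122\}\leftrightarrow\{1,122\}$ (via $1122=(1)(122)$) and $\{2,112\}\leftrightarrow\{2,12,1\}$ (via $112=(1)(12)$), equalizing the two even sets with the two odd ones.

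The crux, and the step I expect to be the main obstacle, is to fix a single deterministic rule selecting \emph{which} part to split or \emph{which} pair to merge so that split and merge are mutually inverse. The difficulty is that after splitting $l=rs$ the long factor $s$ satisfies $s\vartriangleright l$ and may land far from $r$ in the sorted order, so a re-merge guided only by ``the two smallest parts'' need not recover $l$; one must control the position of $s$ among the remaining parts and guarantee that the new parts stay distinct from those already present. I would resolve this by choosing the active site to be the lexicographically extreme admissible one (splitting the smallest part whose standard right factor is forced, and dually merging exactly the pair such a split would produce) and then verify $\phi\circ\phi=\mathrm{id}$ by a case analysis on standard factorizations. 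Finally I would show $\phi$ has no fixed point precisely when $N>1$: a fixed point forces $S$ to be a single one-letter Lyndon word, i.e.\ $|M|=1$, which is excluded. Fixed-point-freeness together with the sign reversal then gives the equality.
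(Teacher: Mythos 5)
Your proposal is correct and takes essentially the same two routes as the paper: the generating-function argument you complete is exactly Sherman's Witt-identity proof that the paper reproduces (its identity $\prod_{m_{1},\ldots,m_{k}\geq 0}(1-x_{1}^{m_{1}}\cdots x_{k}^{m_{k}})^{M(m_{1},\ldots,m_{k})}=1-x_{1}-\cdots-x_{k}$, followed by extracting the coefficient of the monomial determined by $M$), and your split/merge involution is precisely the paper's own bijective proof of the weighted version. The step you flag as the main obstacle---a deterministic rule making split and merge mutually inverse---is resolved in the paper by always splitting the smallest tuple entry $l_{1}$ (when its standard factorization $(r_{1},s_{1})$ stays below $l_{2}$) and otherwise merging $l_{0}=l_{1}l_{2}$, with the paper's key Lemma (covering the cases where $l_{1}$ is a single letter or itself has a standard factorization) guaranteeing that $(l_{1},l_{2})$ is then the standard factorization of $l_{0}$, so the two moves invert each other.
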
 
In the next section, we will give a 
\emph{bijective proof}
a weighted version
of the above coin arrangements lemma.   

\section{Weighted Coin Arrangements Lemma}
In this section, we will first give a weighted 
reformulation of the coin arrangements lemma. 
Then, we present a bijective proof of our 
main result by constructing a weight-preserving 
involution on the set of 
words. But before doing it, for the sake of 
completeness, we present the original proof of 
Sherman based on the so called \emph{Witt identity}
in the context of \emph{combinatorial group theory}
\cite{Hall1968}
.  
\begin{prop}
Let $\Sigma$ be a finite alphabet
of $k$ letters. 
Let $M(m_1,\ldots,m_k)$ be
the number of Lyndon words with $m_1$ occurrences of
$a_1$, $m_2$ occurrences of $a_2$, $\ldots$, $m_k$ occurrences of $a_k$. Let $x_1,\ldots, x_k$
be commuting variables. Then
\begin{equation}
\prod_{m_{1},\ldots,m_{k}\geq 0}(1-x_{1}^{m_{1}}\cdots
x_{k}^{m_{k}})^{M(m_{1}, \ldots, m_{k})}=1-x_{1}-\cdots-x_{k}.
\end{equation}
\end{prop}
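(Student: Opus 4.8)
The plan is to prove the identity as an equality of formal power series in $\mathbb{Z}[[x_1,\ldots,x_k]]$ by enumerating all words over $\Sigma$ in two different ways. First I would assign to each letter $a_i$ the weight $x_i$ and define the weight of a word $w\in\Sigma^{\star}$ to be $\mathrm{wt}(w)=x_1^{m_1}\cdots x_k^{m_k}$, where $m_i$ is the number of occurrences of $a_i$ in $w$. This weight is multiplicative under concatenation, i.e. $\mathrm{wt}(uv)=\mathrm{wt}(u)\mathrm{wt}(v)$, and $\mathrm{wt}(\lambda)=1$. The central object is the generating function $F=\sum_{w\in\Sigma^{\star}}\mathrm{wt}(w)$, and the whole argument consists in evaluating $F$ in two ways.

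The direct evaluation is immediate: a word of length $N$ is an arbitrary sequence of $N$ letters, so summing over all lengths gives a geometric series
\[
F=\sum_{N\geq 0}(x_1+\cdots+x_k)^{N}=\frac{1}{1-(x_1+\cdots+x_k)}.
\]
The second evaluation invokes the Lyndon Factorization Theorem. Every nonempty word factors uniquely as a non-increasing product of Lyndon words, and the empty word is the empty product; hence a word is determined by, and determines, the multiset of Lyndon factors occurring in it, that is, a choice of multiplicity in $\mathbb{Z}_{\geq 0}$ for each Lyndon word. Fixing $\ell\in L$, the factor it contributes to $F$ records how often $\ell$ is repeated, namely $\sum_{j\geq 0}\mathrm{wt}(\ell)^{j}=(1-\mathrm{wt}(\ell))^{-1}$; by multiplicativity of the weight together with uniqueness of the factorization these choices are independent across distinct Lyndon words, so
\[
F=\prod_{\ell\in L}\frac{1}{1-\mathrm{wt}(\ell)}.
\]
Grouping the Lyndon words according to their content $(m_1,\ldots,m_k)$ — there being exactly $M(m_1,\ldots,m_k)$ of them, each of weight $x_1^{m_1}\cdots x_k^{m_k}$ — recasts this as $\prod_{m_1,\ldots,m_k\geq 0}(1-x_1^{m_1}\cdots x_k^{m_k})^{-M(m_1,\ldots,m_k)}$, the all-zero term being harmless since $M(0,\ldots,0)=0$. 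Equating the two expressions for $F$ and taking reciprocals yields the claimed identity.

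The main point requiring care is not any single deep step but the verification that all of these infinite sums and products are legitimate in the formal power series ring. Here I would observe that each Lyndon word of length $\geq 1$ has weight a monomial of total degree $\geq 1$, so only finitely many factors affect the coefficient of any fixed monomial; this makes both $\prod_{\ell\in L}(1-\mathrm{wt}(\ell))^{-1}$ and its regrouped form well-defined and justifies the term-by-term manipulations and the final passage to reciprocals. The genuinely substantive ingredient is the bijective heart of the argument — that the \emph{unique} non-increasing Lyndon factorization is precisely what permits treating the multiplicity of each Lyndon word as an independent geometric parameter — and this is furnished directly by the Lyndon Factorization Theorem quoted above, so no extra work is needed there.
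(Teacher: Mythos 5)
Your proposal is correct and follows exactly the paper's own argument: both evaluate the generating function $\sum_{w\in\Sigma^{\star}}\mathrm{wt}(w)$ once as the geometric series $(1-x_1-\cdots-x_k)^{-1}$ and once, via the unique non-increasing Lyndon factorization, as $\prod_{l\in L}(1-\mathrm{wt}(l))^{-1}$, then regroup by content and take reciprocals. The only difference is presentational — the paper compresses this into one displayed chain of equalities, while you spell out the word/multiset bijection and the formal-power-series convergence, which the paper leaves implicit.
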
 
\begin{proof}
By using Lyndon factorization and formal power series 
identities on words, we have 
\begin{eqnarray}
\frac{1}{1-x_{1}-\cdots-x_{k}}&=&\sum_{w \in \lbrace x_1,\ldots, x_k \rbrace^{\star}}\omega=\prod_{l\in
L}\frac{1}{1-l}\nonumber\\
&=&\frac{1}{\prod_{m_{1},\ldots,m_{k}
\geq 0}(1-x_{1}^{m_{1}}\cdots
x_{k}^{m_{k}})^{M(m_{1}, \ldots, m_{k})}}.\nonumber
\end{eqnarray}

\end{proof}
Now, considering the Witt identity, the proof of 
the coin arrangements lemma 
can be simply obtained by equating 
the coefficients of monomials of the same degree
in both sides of the identity. 
\\
To obtain a weighted generalization of the coin 
arrangements lemma, we first associate 
a formal variable $u_{a}$ with each letter $a$ of 
alphabet $\Sigma$ which can be 
viewed as a weight of that letter. 
For any Lyndon word 
$
l = i_{1} i_{2} \cdots i_{h}
$ 
,
we define the weight $wt(l)$
of the Lyndon word $l \in L$ as the 
product of weights of it's letters. That is, 
$
wt(l) = 
u_{i_{1}} u_{i_{2}} \cdots u_{i_{h}}
$ 
. 
The weight of an $N$-word 
$w \in \Sigma^{\star}
$, 
is defined as 
$
wt(w) = \prod_{l \in tup(w)} wt(l)
$
.
From now on, we will denote the set of all
even (resp. odd) $N$-words over $M$ by $E$ 
(resp. $O$).  
Thus, a weighted version of the coin arrangement 
lemma can be read as follows. 

\begin{thm}\label{WCoin1}
[Weighted Coin Arrangements Lemma]
For any multiset $M$ of cardinality $N>1$, 
the weighted sum of even $N$-words over $M$ is the same as 
the weighted sum of odd $N$-words over $M$. In other words, 
we have 
\begin{equation}
\sum_{w \in E} wt(w) = 
\sum_{w \in O} wt(w). 
\end{equation}
\end{thm}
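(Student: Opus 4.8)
The plan is to prove the identity by a content‑preserving, sign‑reversing involution, after two preliminary reductions. First, since $wt(l)$ is the product of the letter‑weights of $l$ and $wt(w)=\prod_{l\in tup(w)}wt(l)$, and since the Lyndon factors of $w$ partition its letters, the weight of \emph{every} $N$‑word over $M$ collapses to the single content monomial $u_1^{m_1}u_2^{m_2}\cdots u_n^{m_n}$; it depends only on $M$, not on the factorization. Hence any content‑preserving bijection is automatically weight‑preserving, and the claim $\sum_{w\in E}wt(w)=\sum_{w\in O}wt(w)$ is equivalent to $\lvert E\rvert=\lvert O\rvert$, i.e.\ to $\sum_w(-1)^{i_l(w)}=0$. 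Second, by uniqueness of the Lyndon factorization, each admissible $N$‑word (one whose factorization is multiplicity‑free, so that, as in the Remark, its Lyndon tuple consists of distinct words) corresponds bijectively to a \emph{set} $S$ of distinct Lyndon words whose contents multiset‑union to $M$: the decreasing rearrangement of $S$ recovers the word. Under this dictionary $i_l(w)=N-\lvert S\rvert$, so it suffices to build a fixed‑point‑free, content‑preserving involution $\phi$ on these sets that changes $\lvert S\rvert$ by exactly one.

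The involution is assembled from two inverse moves. A \emph{split} takes a factor $l\in S$ with $\lvert l\rvert\geq 2$ and replaces it by the Lyndon words $r,s$ of its standard factorization $l=rs$ (Definition~\ref{standfact1}), which by the characterization proposition satisfy $r\vartriangleleft l\vartriangleleft s$; this raises $\lvert S\rvert$ by one. A \emph{merge} takes two factors of $S$ and replaces them by their concatenation, which the same proposition guarantees is again Lyndon provided the lexicographically smaller word is placed on the left; this lowers $\lvert S\rvert$ by one. Both moves only rearrange letters, so they preserve the content $M$ and hence $wt$. To fuse them into a single well‑defined map I would fix a canonical site at which to act — scanning the factors upward from the smallest and acting at the first place where exactly one of the two moves is legal (split blocked only for a one‑letter factor, merge blocked only when the concatenation is not Lyndon or already occurs in $S$). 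Defining $\phi$ to perform that unique legal move should give $\phi^2=\mathrm{id}$.

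The delicate point, and the step I expect to absorb most of the work, is the \emph{inverse property}: I must show that when a merge concatenates $r\vartriangleleft s$ into $rs$, the pair $(r,s)$ is exactly the standard factorization of $rs$, so the following split reverses it, and symmetrically that the two halves produced by a split are precisely what a merge recombines. This is where the maximality clause of Definition~\ref{standfact1} and the ``second half'' of the characterization proposition ($r\in L$ and $r\vartriangleleft rs\vartriangleleft s$ for $s$ of maximal length) do the real work, and it is what forces the canonical site to be pinned down carefully (via the smallest factor and its standard‑factorization child) so that the splitting and merging target the same position. Once the moves are matched, fixed‑point‑freeness for $N>1$ follows because every admissible set then admits at least one legal move: a lone Lyndon word ($\lvert S\rvert=1$ of length $N\geq 2$) can always be split, while any $S$ with $\lvert S\rvert\geq 2$ offers a mergeable or splittable site.

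As an independent check on the count, and to confirm the range $N>1$, I would read the signed total off the Witt identity. Specializing its variables to the letter‑weights and grouping Lyndon words by content, it becomes
\[
\prod_{l\in L}\bigl(1-wt(l)\bigr)=1-u_1-\cdots-u_n .
\]
Expanding the left side and collecting the monomial $u_1^{m_1}\cdots u_n^{m_n}$ yields exactly $\sum_S(-1)^{\lvert S\rvert}$, where $S$ ranges over sets of distinct Lyndon words with contents union $M$; since the right side has no monomials of total degree exceeding $1$, this sum vanishes for every $M$ with $N>1$. That is precisely $\lvert E\rvert=\lvert O\rvert$, so the Witt identity both certifies the target equality and explains why the involution of the previous paragraphs must be fixed‑point‑free exactly when $N>1$.
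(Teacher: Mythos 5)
Your final ``check'' paragraph is, by itself, a complete and correct proof: your first reduction is right (every $N$-word over $M$ has the same weight $u_1^{m_1}\cdots u_n^{m_n}$, so the weighted identity is just $|E|=|O|$), and extracting the coefficient of $u_1^{m_1}\cdots u_n^{m_n}$ from $\prod_{l\in L}(1-wt(l))=1-u_1-\cdots-u_n$ does give $\sum_S(-1)^{|S|}=0$ whenever $N>1$. But note that this is precisely Sherman's classical argument, which the paper itself reproduces immediately before Theorem \ref{WCoin1} (its Witt-identity proposition, followed by the remark that the lemma follows by equating coefficients of monomials of the same degree). The paper's announced contribution, and its actual proof of Theorem \ref{WCoin1}, is the bijective involution --- exactly the part of your proposal that you leave unfinished.

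That involution is where the genuine gap lies, and it sits exactly where you flag it. Two concrete problems. First, your legality conditions are wrong on both sides: a split can be blocked at a factor of length at least two --- for $S=\{12,\,2\}$ (the word $212$), splitting $12$ into $1,2$ collides with the factor $2$ already present --- while a merge with the $\vartriangleleft$-smaller word on the left \emph{always} yields a Lyndon word by the characterization proposition, so ``not Lyndon'' never blocks it. Second, ``act at the first place where exactly one move is legal'' is not a workable rule: both moves can be legal at the same place, and nothing in your argument shows that the move performed is undone by the move your rule selects on the image --- which is the whole difficulty, not a detail to be absorbed later. The paper's resolution is a fixed priority at a fixed site: always act at the smallest factor $l_1$ of $tup(w)=(l_1,\ldots,l_k)$, calling $l_1$ \emph{splittable} when it is not a single letter and its standard factorization $(r_1,s_1)$ satisfies $s_1\vartriangleleft l_2$; split if splittable, otherwise merge $l_1$ with $l_2$. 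The involution property then rests on two verifications: (a) the paper's key lemma (stated right after Theorem \ref{WCoin1}): if $r\vartriangleleft s$ are Lyndon and either $r$ is a single letter or $r$ fails to split against $s$, then $(r,s)$ \emph{is} the standard factorization of $rs$, proved via the fact that a Lyndon word precedes all of its proper suffixes; and (b) after a split the new smallest factor $r_1$ is never splittable against $s_1$, since a factorization $r_1=r's'$ with $s'\vartriangleleft s_1$ would make $s's_1$ a Lyndon proper suffix of $l_1$ longer than $s_1$, contradicting the maximality in Definition \ref{standfact1}. You correctly name these as ``the delicate point'' and even point at the right ingredients, but you never carry them out; they are the entire content of the paper's proof, so the bijective half of your proposal remains a plan rather than a proof.
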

The following lemma is the key in the proof 
of the above theorem. 
\begin{lem}
\begin{description}
\item[i] 
Let $l=rs$ where $r,s\in  L$ with $r \vartriangleleft s$ and
let $ r $ be a single letter Lyndon word. Then, 
$l=(r,s)$ is the standard factorization
of $l$.

\item[ii]
Let $ l=rs$ where 
$r,s\in  L$ $r \vartriangleleft s$ and
let $r=(r_1,s_1)$ be the standard factorization
of $r$ with $r_{1} \vartriangleleft s_{1}$. Then, 
$ l =(r,s)$ is the standard factorization
of $l$.

\end{description}
\end{lem}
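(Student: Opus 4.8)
The plan is to reduce both parts to one statement about suffixes. By Definition~\ref{standfact1} the right factor of the standard factorization of a Lyndon word $w$ is the \emph{longest proper suffix of $w$ that is itself a Lyndon word}: the requirement that the complementary prefix also be Lyndon is automatic, being exactly the ``moreover'' clause of the Proposition characterizing Lyndon words. Now $r \vartriangleleft s$ with $r,s\in L$ forces $l=rs\in L$ by that same Proposition, and $s$ is already a proper suffix of $l$ belonging to $L$. Hence in each part it suffices to show that \emph{no} proper suffix of $l$ strictly longer than $s$ is a Lyndon word. Every suffix of $l=rs$ of length exceeding $|s|$ has the form $ts$ with $t$ a nonempty suffix of $r$, and it is proper exactly when $t\neq r$; so the lemma reduces to proving $ts\notin L$ for every nonempty proper suffix $t$ of $r$.

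For part (i) this is immediate. If $r$ is a single letter it has no nonempty proper suffix, so the only suffix of $l$ longer than $s$ is $l$ itself, which is not proper. Thus $s$ is the longest proper suffix of $l$, hence a fortiori its longest proper Lyndon suffix, and $(r,s)$ is the standard factorization.

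For part (ii) I would argue by contradiction. Suppose $t$ is a nonempty proper suffix of $r$ with $ts\in L$. Since $s$ is a proper suffix of the Lyndon word $ts$ we get $ts\vartriangleleft s$, and since $t$ is a proper suffix of the Lyndon word $r$ we get $r\vartriangleleft t$. I would then use the standard factorization $r=r_1s_1$ to place $t$, recalling that $s_1$ is the longest proper Lyndon suffix of $r$ (equivalently, the lexicographically least proper suffix of $r$): comparing $|t|$ with $|s_1|$ splits the analysis into the case where $t$ is a suffix of $s_1$ and the case $t=t's_1$ with $t'$ a nonempty proper suffix of $r_1$. In each case the intended mechanism is to play the inequality $ts\vartriangleleft s$ against the inequalities $r_1\vartriangleleft s_1$ and the minimality of $s_1$ among proper suffixes of $r$, together with the concatenation criterion from the Proposition (a product $u\,v$ of Lyndon words with $u\vartriangleleft v$ is again Lyndon), so as to contradict either the assumption that $ts$ is Lyndon or the maximality of $s_1$.

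The main obstacle is the borderline subcase $t=s_1$, in which $ts=s_1s$. Here the Lyndon test for $s_1s$ hinges precisely on the lexicographic comparison of $s$ with $s_1$, and a prefix/overlap relation between them can actually manufacture a longer Lyndon suffix; so this is the step where the hypotheses must be used at full strength. I expect the decisive input to be that $s$ does not lexicographically exceed the right standard factor $s_1$ of $r$ (that is, $s\trianglelefteq s_1$), and getting this comparison --- rather than the routine suffix bookkeeping in the other subcases --- to do its work is the crux.
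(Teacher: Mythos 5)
Your reduction (each part follows once no proper suffix of $l$ longer than $s$ is Lyndon, i.e.\ once $ts\notin L$ for every nonempty proper suffix $t$ of $r$) is sound, and your part (i) is complete and is essentially the paper's own argument. But part (ii) of your proposal is not a proof: the whole burden lands on the subcase $t=s_1$, which you leave open, and the comparison you hope to invoke there, $s \trianglelefteq s_1$, is not among the hypotheses of the lemma and cannot be derived from them. In fact it can fail, and when it fails the lemma itself fails. Take $\Sigma=\{1,2,3\}$ with $1<2<3$, $r=12$, $s=23$. All stated hypotheses of (ii) hold: $r,s\in L$, $r \vartriangleleft s$, and the standard factorization of $r$ is $(r_1,s_1)=(1,2)$ with $r_1 \vartriangleleft s_1$. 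Yet for $t=s_1=2$ the word $ts=223$ is Lyndon, so $l=1223$ has the proper Lyndon suffix $223$, longer than $s$; the standard factorization of $l$ is $(1,223)$, not $(12,23)$. So the crux you flagged is not a technical obstacle to be finessed; it is exactly the point at which the statement, as printed, is false (here $s_1$ is a proper prefix of $s$, so $s_1 \trianglelefteq s$ rather than $s \trianglelefteq s_1$).

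That said, your instinct about the decisive input is exactly right. If one adds the hypothesis $s \trianglelefteq s_1$, the lemma becomes true and your case analysis collapses to a single chain: for any nonempty proper suffix $t$ of $r$ one has $s_1 \trianglelefteq t$ (as you note, $s_1$ is the lexicographically least proper suffix of $r$), so if $ts\in L$ then $ts \vartriangleleft s \trianglelefteq s_1 \trianglelefteq t$, i.e.\ $ts \vartriangleleft t$, which is impossible since $t$ is a proper prefix of $ts$; no splitting into subcases is needed, and the hypothesis $r_1 \vartriangleleft s_1$ plays no role. This strengthened hypothesis (not $r_1 \vartriangleleft s_1$) is also what the merge step of the involution in the proof of Theorem \ref{WCoin1} actually requires in order to be invertible. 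Note finally that the paper's own proof of (ii) does not escape the problem: it assumes a longer Lyndon suffix $s'=s_1's$ exists and argues via the dichotomy ``$s_1 \vartriangleleft s_1'$'' or ``$s_1'$ is a proper right factor of $s_1$,'' which omits precisely the case $s_1'=s_1$ realized by the counterexample above. So your proposal, while it cannot be completed as written, correctly locates the fault line that the paper's argument silently steps over.
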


\begin{proof}
\begin{description}
\item[i] 
In this case, it is obvious that $s$ is of maximal length. Hence by Definition \ref{standfact1}
, the result is immediate.
\item[ii]
Assume in contrary that $s$ is not of maximal length. Then there exists a Lyndon word
$s'=s'_{1} s$ ($s' \vartriangleleft s$) where $s'$ is of maximal length and
$l=r'_{1}s_{1}'$ with $r'_{1}\in L$. Now if 
$s_{1} \vartriangleleft  s'_{1}$, since
$s'_{1} \vartriangleleft s$ it implies that 
$s_{1} \vartriangleleft s$ which is a contradiction. On the other hand,
since $ r=(r_{1},s_{1})$ is the standard factorization
of $r$, $s'_{1}$ must be a proper right 
factor of $s_{1}$. But we already know that every Lyndon word is
smaller than its any proper right factor. Thus we get 
$s_{1} \vartriangleleft s'_{1}$, which is again a contradiction.
\end{description}
\end{proof}

\begin{proof}[The Proof of Theorem \ref{WCoin1}]
For a given $N$-word $w$ with Lydon tuple 
$
tup(w)= (l_{1}, l_{2}, \ldots, l_{k})
$
,
we call the Lyndon word $l_1$ splittable, if $l_{1}$ is not a 
single letter and the standard factorization 
of $l_{1} = (r_{1},s_{1})$ satisfies
$
s_{1} \vartriangleleft l_{1}
$ 
. 
Now, one of the following cases may happen:
\begin{itemize}
\item 
The Lyndon word $l_{1}$ is splittable. Then, 
a mapping 
$$ 
f:~E \mapsto O,\hspace{0.4cm}w' = f(w),
\hspace{0.4cm}
tup(w') = (r_{1}, s_{1}, l_{2}, \ldots, l_{k})
$$
is a well-defined weight-preserving mapping (because 
$
r_{1} \vartriangleleft s_{1} \vartriangleleft l_{2}
$
and 
$
wt(l_1) =wt(r_1) wt(s_1)
$
)
.

\item 
The Lyndon word $l_{1}$ is not splittable. Then, 
a mapping 
$$ 
g:~O \mapsto E,\hspace{0.4cm}w' = f(w),
\hspace{0.4cm}
tup(w') = (l_{0}, l_{3}, \ldots, l_{k})
$$
with $l_{0} = l_{1} l_{2}$,
is a well-defined weight-preserving mapping (because 
$l_{0} \in L$ and 
$
l_{0} \vartriangleleft l_{2} \vartriangleleft l_{3}
$
with 
$
wt(l_1) =wt(r_1) wt(s_1)
$
)
.

\end{itemize}
Clearly the mappings $f$ and $g$ are inverse of one another. 
Thus, the function $f$
is a wight-preserving bijection form the set of 
even $N$-words to the set odd $N$-words and the conclusion 
immediately follows. 
\end{proof}

\end{document}